\documentclass[reqno, 11pt]{amsart}
\usepackage[margin=1in]{geometry}
\usepackage{amsmath,amssymb,amsthm,mathtools}
\usepackage{microtype}
\usepackage[hidelinks]{hyperref}
\usepackage{lineno}
\newtheorem{theorem}{Theorem}[section]
\newtheorem{lemma}[theorem]{Lemma}
\newtheorem{corollary}[theorem]{Corollary}
\newtheorem{proposition}[theorem]{Proposition}
\theoremstyle{remark}

\newcommand{\F}{\mathbb F}
\newcommand{\N}{\mathrm N}
\newcommand{\ex}{\operatorname{ex}}
\begin{document}
\title{Tripartite Zarankiewicz numbers and norm graphs}
\thanks{Research partially supported by NSF grant DMS-2300346.}
\author{Yantao Tang}
\address{Department of Mathematics and Statistics, Georgia State University, Atlanta, GA 30303}
\email{ytang26@gsu.edu}
\author{Yi Zhao}
\address{Department of Mathematics and Statistics, Georgia State University, Atlanta, GA 30303}
\email{yzhao6@gsu.edu}
\date{}
\subjclass[2020]{Primary 05C35; Secondary 05C55, 05C25.}
\keywords{Zarankiewicz problem, multipartite Tur\'an numbers, norm graphs, multipartite Ramsey numbers}
\begin{abstract}
For fixed integers $s\ge t\ge2$, let $\ex(n,n,n,K_{s,t})$ denote the maximum number of edges in a tripartite $K_{s,t}$-free graph with $n$ vertices in each part. When $s\ge(t-1)!+1$, let $r$ be the largest integer satisfying $s\ge(t-1)!r^{t-1}+1$.
Using the quotient norm graphs of Alon, R\'onyai and Szab\'o, we prove that
\[
\ex(n,n,n,K_{s,t}) \ge \left(\frac{3}{2^{1/t}}r^{1-1/t}+o(1)\right)n^{2-1/t}.
\]
Improving an upper bound of Tait and Timmons, we prove that, for all $s\ge t\ge 2$,
\[
 \ex(n,n,n,K_{s,t})\le \left(\frac{3}{2^{1/t}}(s-t+1)^{1/t}+o(1)\right)n^{2-1/t}.
\]
Together, these bounds recover the results of~\cite{LLF,Luo,TaitTimmons} for $t=2$, and give the new asymptotic formula
\[
 \ex(n,n,n,K_{3,3})
 =\left(\frac{3}{\sqrt[3]{2}}+o(1)\right)n^{5/3}.
\]
Analogous results extend to $k$-partite graphs containing no $K_{s, t}$ whose $s$-vertex or $t$-vertex side lies in a single part.
As an application of our tripartite construction, we determine the tripartite multicolor Ramsey number of $K_{3,3}$ asymptotically.
\end{abstract}

\maketitle

\section{Introduction}

Given graphs $G$ and $F$, we say that $G$ is \emph{$F$-free} if no subgraph of $G$ is isomorphic to $F$. The \emph{Tur\'an number of $F$ in $G$} is
\[
 \ex(G,F)=\max\{e(G') : G'\subseteq G\text{ and }G'\text{ is }F\text{-free}\}.
\]
In particular, $\ex(n,F)=\ex(K_n,F)$ is the classical Tur\'an number, while
\[
 \ex(n,n,F)=\ex(K_{n,n},F),\qquad
 \ex(n,n,n,F)=\ex(K_{n,n,n},F)
\]
are the \emph{(balanced) bipartite and tripartite Tur\'an numbers}, respectively.

The classical Zarankiewicz problem concerns $\ex(n,n,K_{s,t})$. We use the convention $s\ge t\ge2$ throughout the paper.
K\H{o}v\'ari, S\'os and Tur\'an~\cite{KST} proved
\begin{equation}\label{eq:kst-bound}
 \ex(n,n,K_{s,t})
 \le (s-1)^{1/t}n^{2-1/t}+(t-1)n.
\end{equation}
For integers $m\ge s$ and $n\ge t$, let $z(m,n,s,t)$ be the maximum number of edges in a bipartite graph with parts of sizes $m$ and $n$ containing no copy of
$K_{s,t}$ whose $s$-vertex side lies in the $m$-vertex part. F\"uredi~\cite{FurediUpper} sharpened the leading constant in \eqref{eq:kst-bound} as follows:
\begin{equation}\label{eq:furedi-bound}
 z(m,n,s,t)
 \le (s-t+1)^{1/t}nm^{1-1/t} +tm^{2-2/t}+tn.
\end{equation}
Nikiforov~\cite{Nikiforov} subsequently improved the lower-order terms.

Determining whether $\ex(n,K_{s,t})=\Omega(n^{2-1/t})$ for all fixed $s\ge t\ge2$ is a major open problem; see \cite{FurediSimonovitsSurvey} for a survey.
For $K_{2,2}=C_4$, Erd\H{o}s, R\'enyi and S\'os~\cite{ErdosRenyiSos} and,
independently, Brown~\cite{Brown} proved that $\ex(n,K_{2,2})=\left(\frac12+o(1)\right)n^{3/2}$.
More generally, F\"uredi~\cite{FurediAsymptotics} proved that, for every $s\ge2$, $\ex(n,K_{s,2})=\left(\frac{\sqrt{s-1}}2+o(1)\right)n^{3/2}$.
Brown~\cite{Brown} also constructed $K_{3,3}$-free graphs showing that
$\ex(n,K_{3,3})\ge\left(1/2-o(1)\right)n^{5/3}$.
Together with F\"uredi's bound \eqref{eq:furedi-bound} and the fact $2\ex(n, K_{s,t})\le \ex(n, n, K_{s,t})$, it follows that $\ex(n,K_{3,3})=\left(1/2+o(1)\right)n^{5/3}$.
Using norm graphs, Koll\'ar, R\'onyai and Szab\'o~\cite{KRS} proved that
$\ex(n,K_{s,t})=\Omega(n^{2-1/t})$ for $t\ge4$ and $s\ge t!+1$.
Alon, R\'onyai and Szab\'o~\cite{ARS} extended this result to $t\ge2$ and
$s\ge(t-1)!+1$ by modifying the construction of~\cite{KRS}.
In particular, the quotient variant of the constructions of~\cite{ARS} gives,
for every fixed $t\ge2$, $r\ge1$ and $s\ge(t-1)!r^{t-1}+1$,
\[
 \ex(n,K_{s,t})
 \ge\left(\frac12r^{1-1/t}-o(1)\right)n^{2-1/t}.
\]
More recently, Bukh~\cite{BukhExponential} proved that 
$\ex(n,K_{s,t})=\Omega(n^{2-1/t})$ for $s> 9^t t^{4 t^{2/3}}$, improving the factorial function on $t$ to an exponential one.

Tait and Timmons~\cite{TaitTimmons} initiated the systematic study of the \emph{multipartite Zarankiewicz problem}, that is, the problem of forbidding complete bipartite graphs in multipartite graphs. Multipartite Tur\'an problems had been studied earlier in the hypergraph setting, e.g., \cite{MubayiTalbot}, and more recently, e.g., \cite{HanZhao}.
For an integer $k\ge1$, let $\ex_{\chi\le k}(N,F)$ be the
maximum number of edges in an $N$-vertex $F$-free graph with chromatic number at most $k$.
Trivially, for any graph $F$, $\ex(n,\ldots,n,F)\le \ex_{\chi\le k}(kn,F)\le\ex(kn,F)$, where $n$ occurs $k$ times in the first term.
Under our convention $s\ge t$, Tait and Timmons~\cite{TaitTimmons} proved that
\begin{equation}\label{eq:tait-timmons-general}
 \ex_{\chi\le3}(N,K_{s,t})
 \le \left(
       \frac{(s-1)^{1/t}}{2^{1/t}3^{1-1/t}}+o(1)
     \right)N^{2-1/t}.
\end{equation}
They proved sharper estimates for $k\ge3$:
\begin{align}
 \ex_{\chi\le k}(N,K_{s,2})
 &\le \left(
       \frac{\sqrt{s-1}}2\left(1-\frac1k\right)^{1/2}+o(1)
      \right)N^{3/2},
 \label{eq:tait-timmons-k2s}\\
 \ex_{\chi\le k}(N,K_{3,3})
 &\le \left(
       \frac12\left(1-\frac1k\right)^{2/3}+o(1)
      \right)N^{5/3}.
 \label{eq:tait-timmons-k33}
\end{align}
For $k=3$, matching lower bounds for $K_{s,2}$ were given in~\cite{TaitTimmons} when $s$ is odd.
Lv, Lu and Fang~\cite{LLF} solved the case for $s=2$, and Luo~\cite{Luo} recently extended it to all even $s$. Together, these results imply that, for every $s\ge2$,
\begin{equation}\label{eq:known-k2t-tripartite}
  \ex_{\chi\le3}(N,K_{s,2}) =\left(\sqrt{\frac{s-1}{6}}+o(1)\right)N^{3/2},\quad
  \ex(n,n,n,K_{s,2}) =\left(3\sqrt{\frac{s-1}{2}}+o(1)\right)n^{3/2}.
\end{equation}

Our first theorem improves \eqref{eq:tait-timmons-general} by replacing $s-1$ with $s-t+1$, while also
recovering \eqref{eq:tait-timmons-k2s} and
\eqref{eq:tait-timmons-k33}.  The proof follows a standard approach: we apply F\"uredi's estimate \eqref{eq:furedi-bound} to the partitions determined by a proper coloring and then optimize the part sizes using concavity.

\begin{theorem}\label{thm:chromatic-upper}
Fix integers $k\ge2$ and $s\ge t\ge2$.  Then
\[
 \ex_{\chi\le k}(N,K_{s,t})
 \le
 \left(
 \frac{1}{2}(s-t+1)^{1/t}
 \left(1-\frac1k\right)^{1-1/t}
 +o(1)
 \right)N^{2-1/t}.
\]
\end{theorem}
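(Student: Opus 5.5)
Let $G$ be a $K_{s,t}$-free graph on $N$ vertices with a proper $k$-colouring whose colour classes $V_1,\dots,V_k$ have sizes $n_1,\dots,n_k$, where $\sum n_i=N$ (empty classes are allowed). The plan is to bound the edges between each pair of classes by F\"uredi's estimate \eqref{eq:furedi-bound}, and then to optimise the class sizes.

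\textbf{Step 1: bounding each pair.} For $i\ne j$, the bipartite graph $G[V_i,V_j]$ has no copy of $K_{s,t}$ with the $s$-side in $V_i$. Applying \eqref{eq:furedi-bound} with $m=n_i$ and $n=n_j$ gives
\[
e(V_i,V_j)\le (s-t+1)^{1/t}n_jn_i^{1-1/t}+tn_i^{2-2/t}+tn_j .
\]
The roles of the two sides can also be swapped. I would symmetrise by averaging the two orientations, although simply summing over ordered pairs and halving is cleaner. Summing over ordered pairs $(i,j)$ and dividing by $2$ gives
\[
e(G)\le \tfrac12 (s-t+1)^{1/t}\sum_{i\ne j} n_i^{1-1/t}n_j + O(N^{2-2/t}+N),
\]
where the error term has $k$ fixed. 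Since $t\ge2$, we have $2-2/t\le 2-1/t-1/t<2-1/t$, and also $1<2-1/t$. So the error is $o(N^{2-1/t})$.

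\textbf{Step 2: optimising the main term.} Write $x_i=n_i/N$, so that $\sum x_i=1$. Then
\[
\sum_{i\ne j}n_i^{1-1/t}n_j=N^{2-1/t}\sum_i x_i^{1-1/t}(1-x_i).
\]
Let $f(x)=x^{1-1/t}(1-x)=x^{1-1/t}-x^{2-1/t}$. This function is concave on $[0,1]$, because $f''(x)$ is a sum of two negative terms. By Jensen's inequality,
\[
\sum_i f(x_i)\le k f(1/k)=k\cdot k^{-(1-1/t)}(1-1/k)=k^{1/t}(1-1/k).
\]

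\textbf{Step 3: checking the target constant.} The target is $\tfrac12(s-t+1)^{1/t}(1-1/k)^{1-1/t}$, stated per $N^{2-1/t}$. The bound from Step 2 gives $\tfrac12(s-t+1)^{1/t}k^{1/t}(1-1/k)$ instead. These are not equal: $k^{1/t}(1-1/k)=(1-1/k)^{1-1/t}\bigl(k(1-1/k)\bigr)^{1/t}=(1-1/k)^{1-1/t}(k-1)^{1/t}$, which is larger by the factor $(k-1)^{1/t}$. So the naive pairwise use of \eqref{eq:furedi-bound} loses, and this is the main obstacle.

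\textbf{The fix: one bipartite graph per class.} Instead of bounding pairs, treat each class $V_i$ against the union of all the other classes. The bipartite graph $G[V_i, V\setminus V_i]$ is still $K_{s,t}$-free, with the $s$-side in $V_i$. Applying \eqref{eq:furedi-bound} with $m=n_i$ and $n=N-n_i$ gives
\[
e(V_i,V\setminus V_i)\le (s-t+1)^{1/t}(N-n_i)n_i^{1-1/t}+o(N^{2-1/t}).
\]
Each edge of $G$ is counted exactly twice in $\sum_i e(V_i,V\setminus V_i)$, so summing over $i$ and halving gives
\[
e(G)\le \tfrac12(s-t+1)^{1/t}N^{2-1/t}\sum_i f(x_i)+o(N^{2-1/t}),
\]
with the same concave $f$ as before. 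This again yields $\tfrac12(s-t+1)^{1/t}k^{1/t}(1-1/k)$, which is the same mismatch. So the correct move is to put $V_i$ on the $t$-side, that is, to take $m=N-n_i$ and $n=n_i$. This is legitimate because \eqref{eq:furedi-bound} forbids $K_{s,t}$ with the $s$-side in the $m$-part, and a $K_{s,t}$-free graph has no such copy in either orientation. The bound becomes
\[
e(V_i,V\setminus V_i)\le (s-t+1)^{1/t}n_i(N-n_i)^{1-1/t}+tN^{2-2/t}+tN .
\]
Summing over $i$ and halving, the main term is $\tfrac12(s-t+1)^{1/t}N^{2-1/t}\sum_i g(x_i)$, where $g(x)=x(1-x)^{1-1/t}$. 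The function $g$ is concave on $[0,1]$; I would verify $g''\le0$ directly. By Jensen's inequality,
\[
\sum_i g(x_i)\le k\cdot\tfrac1k(1-1/k)^{1-1/t}=(1-1/k)^{1-1/t},
\]
which is exactly the claimed constant. The error terms total $O(kN^{2-2/t}+kN)=o(N^{2-1/t})$.

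The only remaining care is to confirm the concavity of $g$ on the whole interval $[0,1]$, including near $x=1$, and to handle classes with $n_i<t$ or $N-n_i<s$. Those classes either fall outside the hypotheses of \eqref{eq:furedi-bound} or can be bounded trivially by $n_i N$ with $n_i\le s$, contributing only $O(N)$ each.
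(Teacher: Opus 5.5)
Your final argument is correct and is exactly the paper's proof: apply \eqref{eq:furedi-bound} to $G[V_i,V\setminus V_i]$ with $V_i$ on the $t$-side ($m=N-n_i$, $n=n_i$), bound the classes with $n_i<t$ or $N-n_i<s$ trivially by $O(N)$, sum using that each edge is counted twice, and apply Jensen to the concave function $x(1-x)^{1-1/t}$. You correctly diagnose that the pairwise bound and the $s$-side orientation you try first each lose a factor $(k-1)^{1/t}$, so both can be dropped from the write-up.
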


Our next theorem gives a lower bound for $\ex(n,n,n,K_{s,t})$.
It follows from the more general Theorem~\ref{thm:onesided-lower} presented later.

\begin{theorem}\label{thm:tripartite-lower}
Let $s\ge t\ge2$ satisfy $s\ge(t-1)!+1$, and let $r$ be the largest integer
such that $s\ge(t-1)!r^{t-1}+1$.
Then
\[
 \ex(n,n,n,K_{s,t})
 \ge
 \left(\frac{3}{2^{1/t}}r^{1-1/t}-o(1)\right)n^{2-1/t}.
\]
\end{theorem}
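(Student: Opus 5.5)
The plan is to realize the tripartite graph as a ``twisted'' blow-up of the quotient norm graph of Alon, R\'onyai and Szab\'o, so that every bipartite pair $(V_i,V_j)$ is essentially a full bipartite norm graph. I expect this to actually follow from the more general Theorem~\ref{thm:onesided-lower} presented later, but below is how I would argue directly.

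\emph{The building block.} Let $q$ be a prime power and put $\N(X)=X^{1+q+\dots+q^{t-2}}$, the norm map $\F_{q^{t-1}}\to\F_q$. Take a subgroup $H\le\F_q^*$ of order about $r$, chosen so that the ARS quotient construction is $K_{s,t}$-free for $s\ge(t-1)!r^{t-1}+1$. Let $\Gamma=\F_{q^{t-1}}\times(\F_q^*/H)$; its size is $M\approx q^t/r$. The bipartite quotient norm graph has two copies of $\Gamma$, with $(A,aH)\sim(B,bH)$ iff $\N(A+B)\in abH$. In this graph every vertex has degree about $q^{t-1}$. By the Bezout-type counting of~\cite{ARS}, any $t$ distinct vertices on one side have at most $(t-1)!r^{t-1}<s$ common neighbours.

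\emph{The construction.} Let $V_1,V_2,V_3$ be disjoint copies of a common vertex set. Fix twisting constants $g_{12},g_{13},g_{23}\in\F_q^*/H$, chosen so that $g_{ij}H\ne g_{ik}H$ whenever $j\ne k$. Join $(A,a)\in V_i$ to $(B,b)\in V_j$ iff $\N(A+B)\in ab\,g_{ij}H$. Each pair $(V_i,V_j)$ is then isomorphic to a bipartite quotient norm graph, after rescaling the second coordinates on one side.

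\emph{Choosing $q$ to hit the constant.} If each part were all of $\Gamma$, each pair would carry about $M q^{t-1}\approx r^{1-1/t}M^{2-1/t}$ edges. That is $2^{1/t}$ more than the theorem claims, and I expect the $K_{s,t}$-freeness check below to force a loss of exactly this factor. Concretely, I plan to take each part of size $n\approx M/2$, for instance by restricting the second coordinate to a suitable half of the cosets. This gives each pair about $n\cdot q^{t-1}/2\approx 2^{-1/t}r^{1-1/t}n^{2-1/t}$ edges, and hence $\bigl(3\cdot2^{-1/t}r^{1-1/t}-o(1)\bigr)n^{2-1/t}$ edges in total. Density of primes handles general $n$.

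\emph{$K_{s,t}$-freeness, the main obstacle.} Consider a copy of $K_{s,t}$ in the tripartite graph.
\begin{itemize}
\item If its $t$-side lies in one part, then its $t$ vertices are distinct elements. Translating each $s$-side vertex into the coordinates of that part, the $s$-side becomes a set of common neighbours in a single bipartite norm graph. These translated vertices must be distinct: two vertices with the same translated image would lie in different parts, so they would have different twists, and a common neighbour would then force $g_{ij}H=g_{ik}H$.
\item The genuinely new case is when both sides meet two parts. One side lies in a single part unless the copy uses all three parts on both sides, and I would split into cases according to which parts each side meets. In each case I would show that the defining equations $\N(X+B_k)\in x b_k g H$ still form a system with at most $(t-1)!r^{t-1}$ solutions. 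The twists make copies of the same field pair in different parts behave as distinct points of an enlarged system.
\end{itemize}
This case analysis, and in particular making the twist choice compatible with the restriction to half of $\Gamma$ in each part, is where I expect the real difficulty. If it fails, the fallback is to prove only the one-sided version (forbidding $K_{s,t}$ whose $t$-side lies in a single part), as Theorem~\ref{thm:onesided-lower} suggests. I would then derive the full statement from it by a separate deletion or counting argument for the remaining configurations.
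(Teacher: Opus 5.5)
Your direct construction does not work, and the failure is at the distinctness claim in your first bullet. Below I write $Q_r$ for your subgroup $H$ and $X$ for your $\Gamma$. With twists acting only on the second coordinate, take $y=(B,b)\in V_2$ and $z=(B,b\,g_{12}g_{13}^{-1})\in V_3$. Both translate to $(B,bg_{12})$. A vertex $x=(A,a)\in V_1$ is adjacent to $z$ iff $\N(A+B)\in ab\,g_{12}Q_r$, which is exactly the condition for $x\sim y$. So $y$ and $z$ have the same neighbourhood in $V_1$, and nothing forces $g_{12}Q_r=g_{13}Q_r$.

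Avoiding all such collisions requires $V_2g_{12}\cap V_3g_{13}$, $V_1g_{12}\cap V_3g_{23}$ and $V_1g_{13}\cap V_2g_{23}$ to be empty. Replacing $V_2,V_3$ by $V_2g_{23}g_{13}^{-1}$ and $V_3g_{23}g_{12}^{-1}$ turns these conditions into pairwise disjointness of three sets of sizes $|V_1|,|V_2|,|V_3|$. Hence $|V_1|+|V_2|+|V_3|\le|X|$, and parts of size $|X|/2$ are impossible. Concretely, take $V_i=\F_{q^{t-1}}\times S$ with $|S|=|\F_q^\ast/Q_r|/2$. Then two of $Sg_{12},Sg_{13},Sg_{23}$ must meet. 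Already for $t=2$, $r=1$, a point $b_0\in Sg_{12}\cap Sg_{13}$ (say) gives, for any $B_0$, vertices $y=(B_0,b_0g_{12}^{-1})\in V_2$ and $z=(B_0,b_0g_{13}^{-1})\in V_3$. These have $|S|\ge2$ common neighbours $(ab_0-B_0,a)$, $a\in S$, in $V_1$, which is a $K_{2,2}$. Equal collision-free parts have size at most $|X|/3$. Since the norm graph is pseudorandom, this yields only about $(3r)^{1-1/t}n^{2-1/t}$ edges, a factor $(2/3)^{1/t}$ short of the theorem.

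The missing idea is to twist by a genuine automorphism of the norm graph that also moves the first coordinate, and to use it antisymmetrically. The paper deletes the vertices with first coordinate $0$ and sets $h(A,aQ_r)=(\iota A,\omega^LaQ_r)$, where $\omega\in\F_q^\ast$ has order $8$, $\iota=\omega^2$ and $L=1+q+\dots+q^{t-2}$. This is an automorphism since $\N(\iota z)=\iota^L\N(z)=(\omega^L)^2\N(z)$. It then joins $x_i$ to $y_{i+1}$ iff $x\sim h(y)$, equivalently $y\sim h^{-1}(x)$. Every part sees the other two through $h$ and $h^{-1}$, so a single condition suffices: $X^\ast=h^{-1}U\sqcup hU$. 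This is achievable because $h^2(A,aQ_r)=(-A,\iota^LaQ_r)$ is fixed-point-free with all cycles of length $2$ or $4$. The common neighbourhood of $T_i$ then corresponds bijectively to the common neighbourhood of $T$ in the norm graph.

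Moving the first coordinate is essential. Scaling only the second coordinate by $g$ is an automorphism only when $g^2\in Q_r$, and then its square is the identity. This is exactly why your scalar twists cannot achieve the factor $2^{-1/t}$.

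Finally, your ``genuinely new case'' is vacuous. Parts are independent sets, so no part meets both sides of a $K_{s,t}$. If neither side lay in a single part, the copy would meet four parts. So every $K_{s,t}$ in a tripartite graph is one-sided, and no deletion argument is needed. The case where the $s$-side lies in one part follows by picking $t$ of its common neighbours and using that the Alon--R\'onyai--Szab\'o bound holds for sets that need not be disjoint.
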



When $t=2$, Theorems~\ref{thm:chromatic-upper} and
\ref{thm:tripartite-lower} together recover the formula
\eqref{eq:known-k2t-tripartite} established in \cite{LLF,Luo,TaitTimmons}.
We obtain the following corollary
by setting $t=3$ in Theorem~\ref{thm:tripartite-lower} and applying Theorem~\ref{thm:chromatic-upper} with $k=3$ and $N=3n$.

\begin{corollary}\label{cor:t3-bounds}
Fix an integer $s\ge3$, and let $r$ be the largest integer such that
$s\ge2r^2+1$. Then
\[
 \left(\frac{3}{\sqrt[3]{2}}r^{2/3}-o(1)\right)n^{5/3}
 \le \ex(n,n,n,K_{s,3})
 \le
 \left(\frac{3}{\sqrt[3]{2}}(s-2)^{1/3}+o(1)\right)n^{5/3}.
\]
In particular, when $s=3$, we have $r=1$ and hence,
\begin{equation}\label{eq:k33}
 \ex(n,n,n,K_{3,3})
 =\left(\frac{3}{\sqrt[3]{2}}+o(1)\right)n^{5/3}.
\end{equation}
\end{corollary}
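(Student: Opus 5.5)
The corollary should follow by specializing the two theorems already stated; no new construction is needed.

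For the lower bound, apply Theorem~\ref{thm:tripartite-lower} with $t=3$. The hypothesis $s\ge(t-1)!+1$ reads $s\ge 3$, which holds by assumption. The condition $s\ge (t-1)!r^{t-1}+1$ becomes $s\ge 2r^2+1$, so the integer $r$ defined in the corollary coincides with the one in Theorem~\ref{thm:tripartite-lower}. Substituting $t=3$ into the constant $\frac{3}{2^{1/t}}r^{1-1/t}$ gives $\frac{3}{\sqrt[3]{2}}r^{2/3}$, and the exponent $2-1/t$ becomes $5/3$.

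For the upper bound, start from the trivial inequality $\ex(n,n,n,K_{s,3})\le \ex_{\chi\le 3}(3n,K_{s,3})$. A tripartite graph with parts of size $n$ has $N=3n$ vertices and chromatic number at most $3$. Now apply Theorem~\ref{thm:chromatic-upper} with $k=3$, $t=3$ and $N=3n$; this gives the bound
\[
\left(\tfrac12 (s-2)^{1/3}\left(\tfrac23\right)^{2/3}+o(1)\right)(3n)^{5/3}.
\]
The constant simplifies as
\[
\tfrac12\cdot\tfrac{2^{2/3}}{3^{2/3}}\cdot 3^{5/3}=\tfrac{3\cdot 2^{2/3}}{2}=\tfrac{3}{2^{1/3}}.
\]
The only point needing care is checking this arithmetic, including that the $o(1)$ term is still $o(1)$ after multiplying by the constant $3^{5/3}$.

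For the special case $s=3$, the largest integer $r$ with $3\ge 2r^2+1$ is $r=1$, and $s-2=1$. The lower and upper constants then both equal $3/\sqrt[3]{2}$, which gives \eqref{eq:k33}. There is no real obstacle beyond confirming that the hypotheses of both theorems hold for $t=3$, namely $s\ge t$ and $s\ge 2+1$.
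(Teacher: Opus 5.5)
Your proposal is correct and matches the paper's argument: set $t=3$ in Theorem~\ref{thm:tripartite-lower} for the lower bound, and apply Theorem~\ref{thm:chromatic-upper} with $k=3$ and $N=3n$ for the upper bound. Your constant simplification $\tfrac12\cdot(2/3)^{2/3}\cdot 3^{5/3}=3/2^{1/3}$ and the check that $r=1$ when $s=3$ are both right.
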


A natural question is whether Theorem~\ref{thm:tripartite-lower} can be extended to the $k$-partite Zarankiewicz number $\ex(n, \dots, n, K_{s, t})$.
At present, we do not even know how to do so for $\ex(n,n,n,n,K_{2,2})$ --
the main difficulty is that a copy 
of $K_{2,2}$ may have four vertices from four different parts of the host graph.

This motivates a one-sided version of the multipartite problem.
We call a copy of $K_{s,t}$ in a multipartite graph \emph{one-sided} if its $s$-vertex or $t$-vertex side lies entirely in a single part.
Let $\ex_k^\ast(n,K_{s,t})$ denote the maximum number of edges in a $k$-partite graph with $n$ vertices in each part that contains no one-sided copy of $K_{s,t}$.
We obtain the following upper bound from 
F\"uredi's bound \eqref{eq:furedi-bound}. 

\begin{proposition}\label{thm:onesided-upper}
Fix integers $k\ge2$ and $s\ge t\ge2$. Then
\[
 \ex^*_{k}(n,K_{s,t})  \le
 \left( \frac{k}{2}(s-t+1)^{1/t} (k-1)^{1-1/t} +o(1) \right) n^{2-1/t}.
\]
\end{proposition}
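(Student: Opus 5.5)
Let $G$ be a $k$-partite graph with parts $V_1,\dots,V_k$, each of size $n$, containing no one-sided copy of $K_{s,t}$. The plan is to bound, for each part $V_i$, the number of edges between $V_i$ and the rest of the graph, and then sum over $i$; each edge is counted exactly twice.

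Fix $i$ and put $W_i=\bigcup_{j\ne i}V_j$, so $|W_i|=(k-1)n$. Let $B_i$ be the bipartite graph with parts $V_i$ and $W_i$ whose edges are the edges of $G$ between them. Suppose $B_i$ contained a copy of $K_{s,t}$ whose $t$-vertex side lies in $V_i$ and whose $s$-vertex side lies in $W_i$. That copy is a subgraph of $G$, and its $t$-vertex side lies in the single part $V_i$, so it is one-sided, which is forbidden. Hence $B_i$ has no $K_{s,t}$ with its $s$-side in $W_i$. Apply F\"uredi's bound \eqref{eq:furedi-bound} with $m=|W_i|=(k-1)n$ (the part that may not host the $s$-side) and with $n$ as the part $V_i$:
\[
 e(B_i)\le (s-t+1)^{1/t}\, n\,\bigl((k-1)n\bigr)^{1-1/t} + t\bigl((k-1)n\bigr)^{2-2/t}+tn .
\]
Since $t\ge 2$ and $k$ is fixed, we have $2-2/t<2-1/t$, so the last two terms are $o(n^{2-1/t})$. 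This gives
\[
 e(B_i)\le \bigl((s-t+1)^{1/t}(k-1)^{1-1/t}+o(1)\bigr)n^{2-1/t}.
\]

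Every edge of $G$ joins two different parts, say $V_i$ and $V_j$, so it lies in exactly two of the graphs $B_1,\dots,B_k$, namely $B_i$ and $B_j$. Therefore $2e(G)=\sum_i e(B_i)$, and summing the $k$ bounds yields the claimed inequality.

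The only delicate point is the orientation in the application of \eqref{eq:furedi-bound}. We must take $m$ to be the large side $W_i$, so that the main term is $n\,m^{1-1/t}$ with $m=(k-1)n$; this produces the factor $(k-1)^{1-1/t}$ rather than $(k-1)$. We may do this because the forbidden configuration only needs the $t$-side inside the single part $V_i$. This is the reason for using the hypothesis on the $t$-vertex side, and not the $s$-vertex side, of a one-sided copy. We also need the hypothesis $m\ge s$, $n\ge t$ of \eqref{eq:furedi-bound}, which holds for large $n$.
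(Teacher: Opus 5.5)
Your proof is correct and is essentially the paper's argument. You apply F\"uredi's bound \eqref{eq:furedi-bound} as $z((k-1)n,n,s,t)$ to the bipartite graph between each part and the union of the others, then sum over the $k$ parts using that every edge is counted exactly twice. The paper simply observes that this bipartite graph is $K_{s,t}$-free outright, whereas you use only the orientation with the $t$-side in $V_i$, which is all that $z((k-1)n,n,s,t)$ requires.
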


If only one-sided copies are forbidden, then Theorem~\ref{thm:tripartite-lower} extends to an
arbitrary number of parts.

\begin{theorem}\label{thm:onesided-lower}
Fix integers $k\ge2$ and $s\ge t\ge2$ satisfying
$s\ge(t-1)!+1$, and let $r$ be the largest integer such that
$s\ge(t-1)!r^{t-1}+1$.
Then
\[
 \ex_k^\ast(n,K_{s,t})
 \ge
 \left(
  \frac{k}{2}(k-1)^{1-1/t}r^{1-1/t}-o(1)
 \right)n^{2-1/t}.
\]
\end{theorem}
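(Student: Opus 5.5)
The plan is to prove Theorem~\ref{thm:onesided-lower} by a construction. We place $k$ copies of a single large vertex set on the $k$ parts and join them with a \emph{twisted} quotient norm graph of Alon, R\'onyai and Szab\'o. The twists are chosen so that, for a vertex in part $V_i$, its neighbourhoods in the other $k-1$ parts correspond to \emph{disjoint} pieces of its neighbourhood in one ambient norm graph $G$ on $M\approx (k-1)n$ vertices. One-sided $K_{s,t}$-freeness of the $k$-partite graph then reduces to $K_{s,t}$-freeness of $G$. The degree count is
\[
 d\approx r^{1-1/t}M^{1-1/t}=r^{1-1/t}\bigl((k-1)n\bigr)^{1-1/t}.
\]
This gives $\tfrac{k}{2}nd$ edges, which is exactly the claimed $\tfrac{k}{2}(k-1)^{1-1/t}r^{1-1/t}n^{2-1/t}$. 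Simply partitioning $G$ on $kn$ vertices would only give the weaker $\tfrac12(k-1)k^{1-1/t}$, so the twisting is essential.

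\textbf{Step 1 (ambient graph).} Recall the quotient projective norm graph. Its vertices are pairs $(X,a)$ with $X\in\F_{q^{t-1}}$ and $a$ in a quotient $\Gamma$ of $\F_q^*$ of order about $(q-1)/r$. Two vertices are adjacent iff $\N(X+Y)\in ab\cdot\Lambda$, where $\Lambda$ is the corresponding subgroup. It has $M\approx q^t/r$ vertices and is $K_{t,s}$-free for $s\ge (t-1)!r^{t-1}+1$: a $t$-set has at most $(t-1)!\,r^{t-1}$ common neighbours, by the Koll\'ar--R\'onyai--Szab\'o counting applied to each of the $r^{t-1}$ coset systems. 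Every vertex has degree $(1+o(1))q^{t-1}=(1+o(1))r^{1-1/t}M^{1-1/t}$. I take this from \cite{ARS} as a black box. Its edge relation is symmetric, so a $t$-set on either side behaves the same way.

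\textbf{Step 2 (twisting).} Choose $q$ so that $k-1$ divides $|\Gamma|$, and let $D\subseteq\Gamma$ be a set of representatives of the cosets of the index-$(k-1)$ subgroup, so $n=|\F_{q^{t-1}}|\,|D|\approx M/(k-1)$. Set $V_i=\F_{q^{t-1}}\times D$ for each $i$. For $i\ne j$, join $(X,a)\in V_i$ to $(Y,b)\in V_j$ iff $\N(X+Y)\in ab\,g^{c(ij)}\Lambda$. Here $g$ generates $\Gamma$ and $c$ is a proper edge colouring of $K_k$ with colours in $\mathbb Z_{k-1}$. Then the map $(Y,b)\mapsto (Y,b\,g^{c(ij)})$ sends $V_j$ onto the coset class $g^{c(ij)}D$. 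For fixed $i$ these classes are pairwise disjoint as $j$ ranges over the other parts, and together they cover $\Gamma$. Hence the union of the neighbourhoods of $(X,a)\in V_i$ injects into its $G$-neighbourhood, with total size $(1-o(1))d$.

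\textbf{Step 3 (checking freeness).} Take a one-sided copy with a $t$-set $T\subseteq V_i$. Its common neighbours, spread over all other parts, map injectively to common $G$-neighbours of $T$, so there are fewer than $s$ of them. The same argument applies when the $s$-side lies in one part: the other side then has only $t\le s$ vertices, and we use $K_{t,s}$-freeness with the roles swapped. Finally, Bertrand-type density of suitable primes lets us pass from $n$ of the special form to all $n$, at a cost of only $o(1)$.

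\textbf{Main obstacle.} The colouring in Step 2 requires $\chi'(K_k)=k-1$, which holds only for even $k$. For odd $k$, a proper colouring needs $k$ classes; this forces $M\approx kn$ and loses a factor $((k-1)/k)^{1/t}$. What I would try first is to drop the requirement that $V_i$ be the same set $D$ for every part. Instead I would look for sets $D_1,\dots,D_k\subseteq\Gamma$ and symmetric twists $c_{ij}=c_{ji}$ such that $\{c_{ij}D_j:j\ne i\}$ partitions $\Gamma$ up to $o(|\Gamma|)$ for every $i$. One route is to take $\Gamma$ to contain a large cyclic factor and let the $D_j$ be long arithmetic-progression-like blocks with carefully chosen shifts, accepting an $o(1)$ overlap that is removed by deleting few edges. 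A second route, if that fails, is to exploit the extra translation freedom $X\mapsto X+u_i$ in the first coordinate. I expect this odd-$k$ bookkeeping to be the delicate part; the rest is inherited directly from \cite{ARS}.
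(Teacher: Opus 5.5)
Your construction is correct for even $k$, with two small fixes. First, $D$ must be the index-$(k-1)$ subgroup $\langle g^{k-1}\rangle$ itself, whose cosets are the $g^cD$, not a set of coset representatives. Second, you need the prime number theorem in progressions rather than a Bertrand-type estimate to get $q=(1-o(1))x$. The common-neighbourhood bijection with $\Gamma_{H_r}(T)$ and the role swap for the $s$-side are exactly as in the paper, and you even avoid deleting the vertices with first coordinate $0$.

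However, the theorem is claimed for all $k\ge2$. For odd $k$ you give no construction. This includes $k=3$, the case that yields Theorem~\ref{thm:tripartite-lower} and the $K_{3,3}$ asymptotics. Moreover, both repairs you suggest fail as formulated. In each, the adjacency between $V_i$ and $V_j$ is a condition like $\N(X+Y+u)\in ab\,g^{c}\Lambda$, which depends only on the ``sum'' of the two endpoints. So the twist seen from $V_i$ is the same translation $\tau_{ij}=\tau_{ji}$ as the twist seen from $V_j$. For $k=3$ you need
\[
\tau_{12}D_2\sqcup\tau_{13}D_3\approx\Gamma,\qquad \tau_{12}D_1\sqcup\tau_{23}D_3\approx\Gamma,\qquad \tau_{13}D_1\sqcup\tau_{23}D_2\approx\Gamma.
\]
The first and third give $D_3\approx\tau_{12}\tau_{23}^{-1}D_1$. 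The second gives $D_3\approx\tau_{12}\tau_{23}^{-1}(\Gamma\setminus D_1)$. Hence $D_1\approx\Gamma\setminus D_1$, which is absurd even allowing $o(|\Gamma|)$ errors. The same computation applies to translations in $\F_{q^{t-1}}\times\Gamma$. This is a set-level version of $\chi'(K_3)=3$.

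The missing idea is an \emph{antisymmetric} twist, which the paper obtains from an automorphism of the norm graph. It takes $h(A,aQ_r)=(\iota A,\omega^LaQ_r)$, where $\omega\in\F_q^\ast$ has order $4(k-1)$, $\iota=\omega^2$ and $L=1+q+\cdots+q^{t-2}$. This is an automorphism because $\N(\iota(A+B))=\iota^L\N(A+B)$ and $(\omega^La)(\omega^Lb)=\iota^Lab$. Join $x_i$ and $y_j$ if and only if $x\sim h^{\ell(i,j)}(y)$. This relation is symmetric whenever $\ell(j,i)=-\ell(i,j)$, and such labels exist for every $k$, odd or even. Take $\ell(i,j)=k-2d(i,j)$, where $d(i,j)\in\{1,\dots,k-1\}$ satisfies $d(i,j)\equiv j-i\pmod k$; then at each $i$ the exponents are exactly $2-k,4-k,\dots,k-2$.

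After deleting the vertices with $A=0$, every cycle of $h^2$ has length divisible by $k-1$. So one can choose $U$ with $X^\ast=\bigsqcup_{a=0}^{k-2}h^{2a}U$, and then $\{h^{\ell(i,j)}U\}_{j\ne i}$ partitions $X^\ast$ for every $i$. From there your Step~3 and your degree count go through unchanged, with degree $q^{t-1}-2$.
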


The proof uses quotient norm graphs of Alon, R\'onyai and Szab\'o~\cite{ARS}. Let $H$ be the subgraph of the quotient norm graph after removing a small portion of vertices. 
We construct an automorphism $h$ of $H$ and a set $U\subseteq V(H)$ so that the \(k-1\) sets \(h^{2-k}U,h^{4-k}U,\ldots,h^{k-2}U\) partition $V(H)$. Using \(k\) copies of \(U\) as the parts, we define edges of the \(k\)-partite graph so that, for any set of vertices in one part, its common neighbors in the other \(k-1\) parts correspond to the common neighborhoods of these vertices in \(h^{2-k}U,h^{4-k}U,\ldots,h^{k-2}U\).

\emph{Every copy of $K_{s,t}$ in a tripartite graph is one-sided}: one part of the tripartite graph cannot meet both sides of $K_{s,t}$, and if neither side were contained in one part, then the copy of $K_{s,t}$ would meet at least four parts, which is impossible. Thus,
$\ex_3^\ast(n,K_{s,t})=\ex(n,n,n,K_{s,t})$, and
Theorem~\ref{thm:onesided-lower} with $k=3$ implies
Theorem~\ref{thm:tripartite-lower} because
$(3/2)2^{1-1/t}=3/2^{1/t}$.

Combining Proposition~\ref{thm:onesided-upper} and Theorem~\ref{thm:onesided-lower} gives the following corollary.
\begin{corollary}
For every $k\ge2$ and $s\ge 2$, we have
\begin{align*}
 \ex_k^\ast(n,K_{s,2})
 =\left(\frac{k}{2}\sqrt{(k-1)(s-1)}+o(1)\right)n^{3/2}, 
 \quad
 \ex_k^\ast(n,K_{3,3})
 =\left(\frac{k}{2}(k-1)^{2/3}+o(1)\right)n^{5/3}.
\end{align*}
\end{corollary}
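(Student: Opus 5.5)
This corollary should follow by directly comparing the leading constants in Proposition~\ref{thm:onesided-upper} and Theorem~\ref{thm:onesided-lower}. For each of the two forbidden graphs, I will check the hypotheses of Theorem~\ref{thm:onesided-lower}, compute the parameter $r$, and verify that $r=s-t+1$. Once that holds, the two bounds have the same leading coefficient $\frac{k}{2}(k-1)^{1-1/t}(s-t+1)^{1/t-?}$; more precisely, the upper constant $\frac k2(s-t+1)^{1/t}(k-1)^{1-1/t}$ and the lower constant $\frac k2(k-1)^{1-1/t}r^{1-1/t}$ must agree. This needs $r^{1-1/t}=(s-t+1)^{1/t}$, which holds in particular when $r=s-t+1=1$. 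It also holds in the case $t=2$, where both exponents equal $1/2$.

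\emph{Case $K_{s,2}$.} Here $t=2$ and $s\ge2$, so $s\ge t$ and $s\ge (t-1)!+1=2$. The condition $s\ge (t-1)!r^{t-1}+1=r+1$ has largest integer solution $r=s-1$. The lower bound of Theorem~\ref{thm:onesided-lower} therefore has constant $\frac k2(k-1)^{1/2}(s-1)^{1/2}$. The upper bound of Proposition~\ref{thm:onesided-upper} has constant $\frac k2(s-1)^{1/2}(k-1)^{1/2}$, since $s-t+1=s-1$. These coincide, which gives $\frac k2\sqrt{(k-1)(s-1)}$.

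\emph{Case $K_{3,3}$.} Here $s=t=3$, and $s\ge (t-1)!+1=3$ holds. The condition $3\ge 2r^2+1$ forces $r=1$. Also $s-t+1=1$, so both constants reduce to $\frac k2(k-1)^{2/3}$.

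In each case, combining the two inequalities squeezes $\ex_k^\ast(n,K_{s,t})$ between $(c-o(1))n^{2-1/t}$ and $(c+o(1))n^{2-1/t}$ with the same constant $c$, which is the claimed asymptotic formula. There is no real obstacle: the only care needed is confirming the hypotheses $s\ge t$ and $s\ge(t-1)!+1$, and computing $r$ correctly, in particular noting that $r=s-1$ is the largest valid choice when $t=2$.
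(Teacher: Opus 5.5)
Your proposal is correct and follows the paper's argument, which obtains the corollary by combining Proposition~\ref{thm:onesided-upper} with Theorem~\ref{thm:onesided-lower}. Your computations $r=s-1=s-t+1$ for $t=2$ and $r=1=s-t+1$ for $s=t=3$ are exactly what makes the two leading constants coincide; just delete the garbled exponent in your first paragraph, since you immediately give the correct comparison $r^{1-1/t}=(s-t+1)^{1/t}$.
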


Alon, R\'onyai and Szab\'o~\cite{ARS} applied norm graph constructions to the study of multicolor Ramsey numbers. Let $r_m(H)$ be the least $n$ such that every $m$-coloring of $K_n$ contains a monochromatic copy of $H$. They showed that
\begin{equation}\label{eq:rkK33}
 r_m(K_{3,3})=(1+o(1))m^3.
\end{equation}
Our construction behind Theorem~\ref{thm:onesided-lower} has a similar application.
For integers $m\ge1$ and $k\ge2$, let $r_m^{(k)}(H)$ be the least $n$ such that every $m$-coloring of the complete 
$k$-partite graph $K_{n, \dots, n}$ yields a monochromatic copy of $H$. This multipartite Ramsey parameter was introduced in~\cite{DayEtAl}. In the bipartite case, it is known that
$r_m^{(2)}(K_{2,s})=(s-1+o(1))m^2$ and $r_m^{(2)}(K_{3,3})=(1+o(1))m^3$ for every fixed $s\ge2$~\cite{WangLiLi}.
Our next theorem determines $r_m^{(3)}(K_{3,3})$ asymptotically.

\begin{theorem}\label{thm:ramsey}
We have $r_m^{(3)}(K_{3,3})=\left(\frac12+o(1)\right)m^3$.
\end{theorem}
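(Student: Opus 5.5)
We need to prove both bounds: $r_m^{(3)}(K_{3,3})\le(\frac12+o(1))m^3$ and $r_m^{(3)}(K_{3,3})\ge(\frac12-o(1))m^3$.

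\emph{Upper bound.} Fix an $m$-coloring of $K_{n,n,n}$ with no monochromatic $K_{3,3}$. Each color class is a $K_{3,3}$-free tripartite graph, so by the pigeonhole principle some color class has at least $3n^2/m$ edges. Corollary~\ref{cor:t3-bounds} (equivalently Theorem~\ref{thm:chromatic-upper} with $k=3$, $N=3n$) bounds this class by $(3/\sqrt[3]{2}+o(1))n^{5/3}$. Hence
\[
3n^2/m\le (3\cdot 2^{-1/3}+o(1))n^{5/3},
\]
which gives $n^{1/3}\le (2^{1/3}+o(1))m$, that is, $n\le(2+o(1))m^3$.

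This does not match the claimed $\frac12 m^3$, so I should recheck the statement. Since $K_{n,n,n}$ has $3n^2$ edges, the bound obtained is $n\lesssim 2m^3$. Either the statement means $n$ differently, or I have misread it; I will take the statement at face value and look for a sharper argument. The loss presumably comes from using the tripartite extremal number. The intended proof probably uses the \emph{bipartite} pairs separately: some pair of parts together with some color gives a subgraph of $K_{n,n}$.

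Consider the three bipartite graphs between pairs of parts. An averaging argument gives a color $c$ and a pair, say $(V_1,V_2)$, on which color $c$ has at least $n^2/m$ edges. F\"uredi's bound \eqref{eq:furedi-bound} then gives
\[
n^2/m\le(1+o(1))n^{5/3},
\]
so $n\le(1+o(1))m^3$, which is weaker still. The tripartite count is therefore the better of the two. So I would aim to prove $r_m^{(3)}(K_{3,3})\le(2+o(1))m^3$ and determine whether $\frac12$ arises from a different normalization. If the paper's $r^{(3)}_m$ instead counts total vertices $N=3n$, the bound becomes $N\le(6+o(1))m^3$, so that does not explain it either.

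\emph{Lower bound.} I would use the construction behind Theorem~\ref{thm:onesided-lower} with $t=3$, $r=1$, built from the quotient norm graph over $\F_{q^3}$ with $q\approx m$. The plan is to take its vertex set partitioned into cosets, or to use translates $x\mapsto x+a$ of the norm-graph adjacency $\N(x+y)=c$ for varying constants $c$. As $c$ ranges over the $q-1$ nonzero norm values, the color classes cover the edge set, in the same way that ARS obtain $r_m(K_{3,3})\ge(1-o(1))m^3$ from the partition by norm values.

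In the tripartite setting, the automorphism $h$ and the set $U$ yield three parts of size roughly $q^3/2$. For each norm value one gets a $K_{3,3}$-free tripartite graph, and these $\approx q$ graphs partition all edges of $K_{|U|,|U|,|U|}$. This gives a coloring with $m\approx q$ colors on parts of size $\approx q^3/2$, hence $r_m^{(3)}(K_{3,3})\ge(\frac12-o(1))m^3$.

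That matches the stated $\frac12$, so my upper-bound arithmetic must be wrong. Checking the edge density supports this: each color class has about $3(q^3/2)^2/q$ edges, which equals $(3/\sqrt[3]{2})n^{5/3}$ when $n=q^3/2$, since $n^{1/3}=q/2^{1/3}$. Redoing the upper bound with this in mind, $3n^2/m\le 3\cdot2^{-1/3}n^{5/3}$ gives $n^{1/3}\le 2^{-1/3}m$, so $n\le m^3/2$. The earlier slip was a sign in the exponent, and the two bounds match.

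\emph{Main obstacle.} The hard step is verifying in the lower bound that the norm-value classes genuinely partition all tripartite edges. This requires that $h$ preserves norms up to a fixed scalar, and that the removed vertices (those with $\N(x+y)=0$ or with degenerate quotient coordinates) cost only $o(1)$. One must also choose primes $q$ close to $m$.
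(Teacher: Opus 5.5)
Your upper bound, once you fix the exponent slip, is exactly the paper's argument: summing $e(G_c)\le(3/\sqrt[3]{2}+o(1))n^{5/3}$ over the $m$ color classes gives $n\le(\frac12+o(1))m^3$.

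Your lower-bound framework is also the right one. Two corrections: the field is $\F_{q^2}$, not $\F_{q^3}$, since $t=3$; and no quotient is needed since $r=1$. For each $\lambda\in\F_q^\ast$, take $H_\lambda$ given by $\N(A+B)=\lambda ab$ on $\F_{q^2}\times\F_q^\ast$, with the common automorphism $h(A,a)=(\iota A,\omega^{1+q}a)$, $q\equiv1\pmod 8$. Color $x=(A,a)_i$, $y=(B,b)_{i+1}$ by $\N(A+\iota B)/(\omega^{1+q}ab)$. The gap is your claim that these $q-1$ classes partition the edges of $K_{|U|,|U|,|U|}$ up to an $o(1)$ loss from ``removed vertices.''

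The pairs with $A+\iota B=0$, i.e.\ $B=\iota A$ (since $\iota^2=-1$), have norm $0$ and receive no color. These are pairs, not vertices. For each $i$ and $A\in\F_{q^2}^\ast$ they form a complete bipartite graph $Q_{i,A}$ between $\{(A,a)_i\in U_i\}$ and $\{(\iota A,b)_{i+1}\in U_{i+1}\}$, with sides of size up to $q-1$.

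They are only a vanishing fraction of all pairs, but that buys nothing: a Ramsey coloring must color every pair of $K_{n,n,n}$, so you cannot discard them.
\begin{itemize}
\item Destroying them by deleting vertices means deleting a whole side of each $Q_{i,A}$ with both sides nonempty. That is, you delete entire layers $\{(A,\cdot)\}$ around the $12$-cycles $(1,A),(2,\iota A),(3,-A),(1,-\iota A),\dots$, which costs a positive fraction of the vertices and ruins the constant $\frac12$.
\item A single extra color fails, because some $Q_{i,A}$ have both sides of size $\Omega(q)$ and so contain a monochromatic $K_{3,3}$.
\item You give no argument that these pairs can be absorbed into the norm-value classes without creating a monochromatic $K_{3,3}$.
\end{itemize}

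The missing idea, as in the ARS proof of $r_m(K_{3,3})=(1+o(1))m^3$, is to color the leftover pairs with $o(q)$ fresh colors. Set $\rho=\lceil 2q^{1/3}\rceil$. Since $r_\rho(K_{3,3})=(1+o(1))\rho^3>2(q-1)$, there is a $\rho$-coloring of $K_{2(q-1)}$ with no monochromatic $K_{3,3}$. Color every $Q_{i,A}$ by it, using a separate palette for each of the three pairs of parts. This creates no monochromatic $K_{3,3}$, because for fixed $i$ the graphs $Q_{i,A}$ are vertex-disjoint. The total is $q-1+3\rho=q+o(q)$ colors. Choosing a prime $q\equiv1\pmod 8$ with $q+3\rho\le m$ and $q=(1-o(1))m$ then gives $r_m^{(3)}(K_{3,3})>n_q=(\frac12-o(1))m^3$.
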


\subsection*{Notation}
All graphs are simple except for norm graphs, which have loops.
Let $H$ be a graph that may have loops.
For $x\in V(H)$, let $\Gamma_H(x)$ denote the neighborhood of $x$ in $H$ (if there is a loop at $x$, then $x\in \Gamma_H(x)$).  
For $S\subseteq V(H)$, let $\Gamma_H(S)=\bigcap_{x\in S}\Gamma_H(x)$ denote the common neighborhood of $S$ in $H$.  We omit subscripts when the underlying graph is clear.  We write $A\sqcup B$ for the union of two disjoint sets $A$ and $B$.  

For a prime power $q$, we use $\F_q$ to denote the finite field with $q$
elements and $\F_q^\ast$ for its multiplicative group.  Let $\N(z)=\N_{\F_{q^{t-1}}/\F_q}(z)=z^{1+q+\cdots+q^{t-2}}$ be the norm of $z\in\F_{q^{t-1}}$ over $\F_q$.

\subsection*{Organization of the paper}

Section~\ref{sec:upper} proves Theorem~\ref{thm:chromatic-upper}
and Proposition~\ref{thm:onesided-upper}, while Section~\ref{sec:construction} gives norm graph constructions and proves
Theorem~\ref{thm:onesided-lower}.
The Ramsey application appears in Section~\ref{sec:applications}.

\section{Upper bounds}\label{sec:upper}

\begin{proof}[Proof of Theorem~\ref{thm:chromatic-upper}]
Let $G$ be an $N$-vertex $K_{s,t}$-free graph with a proper $k$-coloring of $V(G)=A_1\sqcup\cdots\sqcup A_k,$
where some $A_i$ may be empty.
For $1\le i\le k$, let $n_i=|A_i|$, $\delta_i={n_i}/{N}$, $B_i=V(G)\setminus A_i$ and $e_i=e(A_i,B_i)$.
Since $G$ is $K_{s,t}$-free, the bipartite subgraph $G[A_i,B_i]$ is also $K_{s,t}$-free for each $1\le i\le k$. If $n_i<t$ or $N-n_i<s$, then $e_i\le n_i(N-n_i)=O(N)$. For every other $i$, \eqref{eq:furedi-bound} gives
\[
 e_i\le z(N-n_i,n_i,s,t)\le (s-t+1)^{1/t}n_i(N-n_i)^{1-1/t}
 +t(N-n_i)^{2-2/t}+tn_i.
\]
Let $f(x)=x(1-x)^{1-1/t}$ for $x\in[0,1]$.
Every edge of $G$ is counted in exactly two of the $e_i$. Hence, summing the preceding bounds over $i$ gives
\begin{equation}\label{eq:upper-normalized}
 2e(G)=\sum_{i=1}^k e_i
 \le (s-t+1)^{1/t}N^{2-1/t}\sum_{i=1}^k f(\delta_i)+o(N^{2-1/t}).
\end{equation}

A direct calculation gives $f''(x)=\left(1-1/t\right)(1-x)^{-1-1/t}
 \left(\left(2-1/t\right)x-2\right)<0$ for $0<x<1$,
so $f$ is concave on $[0,1]$. Since $\sum_{i=1}^k\delta_i=1$, Jensen's inequality gives $\sum_{i=1}^k f(\delta_i)
 \le kf\left(\frac{1}{k}\right)
 =\left(1-\frac1k\right)^{1-1/t}$.
Substituting this inequality into \eqref{eq:upper-normalized} yields
\[
 e(G)\le
 \left(
  \frac{1}{2}(s-t+1)^{1/t}
  \left(1-\frac1k\right)^{1-1/t}+o(1)
 \right)N^{2-1/t}. \qedhere
\]
\end{proof}

\begin{proof}[Proof of Proposition~\ref{thm:onesided-upper}]
Let $G$ be a $k$-partite graph with parts $A_1,\ldots,A_k$, each of size $n$, and suppose that $G$ contains no
one-sided copy of $K_{s,t}$. 
For each $i$, $G[A_i,V(G)\setminus A_i]$ is $K_{s,t}$-free. Therefore, \eqref{eq:furedi-bound} gives
\begin{align*}
     e(A_i,V(G)\setminus A_i) &\le z((k-1)n,n,s,t)\\
     &\le (s-t+1)^{1/t}(k-1)^{1-1/t}n^{2-1/t}
     +t(k-1)^{2-2/t}n^{2-2/t}+tn.
\end{align*}
Summing over $i$ and noting that every edge of $G$ is counted twice, we obtain 
\[
 2e(G) \le k(s-t+1)^{1/t}(k-1)^{1-1/t}n^{2-1/t}
     +o(n^{2-1/t}),
\]
which gives the desired bound.
\end{proof}

\section{A multipartite construction from norm graphs}\label{sec:construction}
We first recall the construction of Alon, R\'onyai and Szab\'o~\cite[Section~4]{ARS}.
Given integers $s\ge t\ge2$ with $s\ge(t-1)!+1$, let $r$ be the largest integer
such that $s\ge(t-1)!r^{t-1}+1$.
Let $q$ be a prime power such that $r$ divides $q-1$, and let $Q_r$ be the subgroup of
$\F_q^\ast$ of order $r$.  Let $X=\F_{q^{t-1}}\times(\F_q^\ast/Q_r)$.
Note that $|X|=q^{t-1}(q-1)/r=(q^t-q^{t-1})/r$.
Construct a graph $H_r=H_r(q,t)$ with vertex set $X$ by declaring two (not necessarily distinct) vertices $x=(A,aQ_r)$ and $y=(B,bQ_r)$ adjacent if
\begin{equation}\label{eq:norm-relation}
 x\sim y \quad\Longleftrightarrow\quad
 \N(A+B)\in abQ_r.
\end{equation}
(Note that $x\sim x$ if and only if $\N(2A)\in a^2Q_r$.)
For a vertex $x=(A,aQ_r)$, relation~\eqref{eq:norm-relation} determines a unique
coset $bQ_r$ for each $B\ne-A$. Hence $|\Gamma(x)|=q^{t-1}-1$, including possible loops.
Alon, R\'onyai and Szab\'o~\cite[Section~4]{ARS} showed that\footnote{Note that \eqref{eq:Kst-free} is stronger than $H_r$ being $K_{(t-1)!r^{t-1}+1,t}$-free because \eqref{eq:Kst-free} means that there are no sets $S, T\subseteq X$, not necessarily disjoint, with $|S|=(t-1)!r^{t-1}+1$, $|T|=t$ such that $x\sim y$ for all $x\in S$ and $y\in T$.} 
\begin{align}
\label{eq:Kst-free}
|\Gamma_{H_r}(T)|\le (t-1)!r^{t-1}< s \quad \text{for all} \ T\subseteq X \text{ with } |T|=t.
\end{align}

Our construction begins by deleting the vertices whose first coordinate is zero. Let $X^\ast=\F_{q^{t-1}}^\ast\times(\F_q^\ast/Q_r)$. Every vertex of $X^\ast$ loses exactly one neighbor, namely the unique vertex with first coordinate zero. Therefore, every vertex of $H_r[X^\ast]$ has exactly $q^{t-1}-2$ neighbors.

Fix $k\ge2$ and suppose that $q\equiv1\pmod{4r(k-1)}$.
Let $H=H_r[X^\ast]$.
Choose $\omega\in\F_q^\ast$ of order $4(k-1)$, set $\iota=\omega^2$, and
let $L=1+q+\cdots+q^{t-2}$. We define a bijection $h: X^\ast \to X^\ast$ by $h(A,aQ_r)=(\iota A,\omega^L aQ_r)$. Consider $x=(A,aQ_r)$ and $y=(B,bQ_r)$ in $X^*$. 
Since $\iota\in\F_q^\ast$,
\[
 \N(\iota(A+B))=\iota^L\N(A+B)
 \quad\text{and}\quad
 (\omega^L a)(\omega^L b)=\iota^L ab.
\]
Consequently,
\[
 h(x)\sim h(y)
 \Longleftrightarrow \N(\iota(A+B))\in(\omega^L a)(\omega^L b)Q_r
 \Longleftrightarrow \iota^L\N(A+B)\in\iota^L abQ_r.
\]
It follows that $ x\sim y$ if and only if $h(x)\sim h(y)$. Thus $h$ is an automorphism of $H$.

Set $g=h^2$. For $(A,aQ_r)\in X^\ast$, it follows that $g(A,aQ_r)=(\omega^4A,\omega^{2L}aQ_r)$. The element $\omega^4$ has order $k-1$, and $g^{2(k-1)}$ is the identity in
$X^\ast$. If $g^d(A,aQ_r)=(A,aQ_r)$, then $\omega^{4d}A=A$; since $A\ne0$, it follows that $(k-1)\mid d$. Thus every cycle of $g$ has length $k-1$ or
$2(k-1)$. By the disjoint-cycle decomposition of a finite permutation, these cycles partition $X^\ast$. For each cycle $C$, choose $x_C\in C$ and define a set $U\subseteq X^*$ by
\[
 U\cap C=\{g^{j(k-1)}(x_C):0\le j<|C|/(k-1)\}.
\]
For each $C$, the sets $g^a(U\cap C)$, $0\le a\le k-2$, partition $C$.
Consequently,
\[
 X^\ast=\bigsqcup_{a=0}^{k-2}h^{2a}U, \quad \text{and}
 \quad
 |U|=\frac{|X^\ast|}{k-1}
 =\frac{(q^{t-1}-1)(q-1)}{r(k-1)}=: n_q.
\]

Now we construct a $k$-partite graph $G_q^{(k)}$. Index its parts by $\mathbb Z/k\mathbb Z$, and let $U_i=U\times\{i\}$ be the parts. We write $x_i\in U_i$ for $(x,i)\in U_i$. 
For distinct $i,j\in\mathbb Z/k\mathbb Z$, let
$d(i,j)= j - i \pmod{k} \in\{1,\ldots,k-1\}$ and
set $\ell(i,j)=k-2d(i,j)$. Since $d(j,i)=k-d(i,j)$, we have $\ell(j,i)=-\ell(i,j)$. 
For $x,y\in U$, join $x_i\in U_i$ and $y_j\in U_j$ if and only if $x\sim h^{\ell(i,j)}(y)$ in $H$. Since $h$ is an automorphism, 
\[
 x\sim h^{\ell(i,j)}(y)
 \quad\Longleftrightarrow\quad
 y\sim h^{-\ell(i,j)}(x)
 \quad\Longleftrightarrow\quad
 y\sim h^{\ell(j,i)}(x).
\]
The adjacency of $G_q^{(k)}$ is thus symmetric.

For a fixed $i$, as $j$ ranges over $\mathbb Z/k\mathbb Z\setminus\{i\}$,
the value $d(i,j)$ ranges over $1,\ldots,k-1$.  Hence
\[
 \{\ell(i,j):j\ne i\}
 =\{k-2,k-4,\ldots,2-k\}
 =\{2-k+2a:0\le a\le k-2\}.
\]
Applying $h^{2-k}$ to the earlier partition of $X^\ast$ gives
\begin{align}\label{eq:X*} 
X^\ast=\bigsqcup_{j\ne i}h^{\ell(i,j)}U.
\end{align}
For $T\subseteq U$, write $T_i=\{x_i:x\in T\}$. Recall that $\Gamma_H(T)$ denotes the common neighborhood of $T$ in $H$.

\begin{lemma}\label{lem:multipartite-slicing}
For every nonempty set $T\subseteq U$ and $i\in\mathbb Z/k\mathbb Z$, we have $|\Gamma_{G_q^{(k)}}(T_i)|=|\Gamma_H(T)|$.
\end{lemma}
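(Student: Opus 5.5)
The plan is to build an explicit bijection between $\Gamma_{G_q^{(k)}}(T_i)$ and $\Gamma_H(T)$, one part at a time, and then glue the pieces together using the partition \eqref{eq:X*}. Since $G_q^{(k)}$ is $k$-partite and there are no edges inside $U_i$, the common neighborhood of $T_i$ lies in $\bigcup_{j\ne i}U_j$. It therefore decomposes as a disjoint union over $j\ne i$ of the sets $\Gamma_{G_q^{(k)}}(T_i)\cap U_j$. I will compute each slice separately.

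Fix $j\ne i$ and write $\ell=\ell(i,j)$. By definition, $y_j\in U_j$ is adjacent to every $x_i$ with $x\in T$ if and only if $x\sim h^{\ell}(y)$ in $H$ for all $x\in T$. This is the same as saying $h^{\ell}(y)\in\Gamma_H(T)$. So the map $y_j\mapsto h^{\ell}(y)$ sends $\Gamma_{G_q^{(k)}}(T_i)\cap U_j$ bijectively onto $\Gamma_H(T)\cap h^{\ell}U$. It is injective because $h$ is a bijection of $X^\ast$. It is surjective onto that intersection because any $z\in\Gamma_H(T)\cap h^{\ell}U$ equals $h^{\ell}(y)$ for some $y\in U$, and then $y_j$ is a common neighbor of $T_i$.

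Summing over $j\ne i$ gives
\[
|\Gamma_{G_q^{(k)}}(T_i)|=\sum_{j\ne i}|\Gamma_H(T)\cap h^{\ell(i,j)}U|.
\]
By \eqref{eq:X*}, the sets $h^{\ell(i,j)}U$ for $j\ne i$ partition $X^\ast=V(H)$. Since $\Gamma_H(T)\subseteq X^\ast$, the sum therefore equals $|\Gamma_H(T)|$.

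The only point needing care is loops. $H$ may contain loops, so $\Gamma_H(T)$ may include elements of $T$ itself, or more precisely elements of the form $h^{\ell}(y)$ with $y\in U$ related to $T$. In $G_q^{(k)}$, however, a vertex $y_j$ with $j\ne i$ is never equal to some $x_i$. So no exclusion of "the vertex itself" ever arises, and the slice-by-slice correspondence is exact. No step is a genuine obstacle; the key input is that the partition \eqref{eq:X*} uses exactly the exponents $\ell(i,j)$ for $j\ne i$.
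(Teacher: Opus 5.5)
Your proposal is correct and follows essentially the same route as the paper: for each $j\ne i$, the map $y_j\mapsto h^{\ell(i,j)}(y)$ is a bijection from $\Gamma_{G_q^{(k)}}(T_i)\cap U_j$ onto $\Gamma_H(T)\cap h^{\ell(i,j)}U$, and you then sum over $j\ne i$ using the partition \eqref{eq:X*}. Your remark on loops is a harmless extra check. A loop at $x\in T$ in $H$ corresponds in $G_q^{(k)}$ to an ordinary edge between the distinct vertices $x_i$ and $y_j$, so nothing needs to be excluded.
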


\begin{proof}
Fix $j\ne i$.
For $y\in U$, we have
\[
\begin{aligned}
 y_j\in\Gamma_{G_q^{(k)}}(T_i)
 &\Longleftrightarrow x\sim h^{\ell(i,j)}(y)\text{ for every }x\in T\\
 &\Longleftrightarrow h^{\ell(i,j)}(y)\in\Gamma_H(T)\cap h^{\ell(i,j)}U.
\end{aligned}
\]
For every $j\ne i$, the map $y_j\longmapsto h^{\ell(i,j)}(y)$ is a bijection from
$\Gamma_{G_q^{(k)}}(T_i)\cap U_j$ to $\Gamma_H(T)\cap h^{\ell(i,j)}U$. Using \eqref{eq:X*}, we obtain that
\[
|\Gamma_{G_q^{(k)}}(T_i)|=\sum_{j\ne i} |\Gamma_{G_q^{(k)}}(T_i)\cap U_j|
 =\sum_{j\ne i} |\Gamma_H(T)\cap h^{\ell(i,j)}U| =|\Gamma_H(T)|. \qedhere
\]
\end{proof}

\begin{proof}[Proof of Theorem~\ref{thm:onesided-lower}]
Given a prime power $q\equiv1\pmod{4r(k-1)}$, we construct the graph $G_q^{(k)}$ as above.
Suppose that $G_q^{(k)}$ contains a one-sided copy of $K_{s,t}$. Let $T_i$ be the side of $K_{s,t}$ contained in $U_i$ and let $T=\{x\in U: x_i\in T_i\}$. 
If $|T|=t$, then Lemma~\ref{lem:multipartite-slicing} gives $|\Gamma_{H}(T)|= |\Gamma_{G_q^{(k)}}(T_i)|\ge s$, contradicting \eqref{eq:Kst-free}.
If $|T|=s$, then Lemma~\ref{lem:multipartite-slicing} gives $|\Gamma_{H}(T)|= |\Gamma_{G_q^{(k)}}(T_i)|\ge t$. Choose a set $S\subseteq \Gamma_{H}(T)$ of size $t$. Then $|\Gamma_H(S)|\ge |T| = s$, again contradicting \eqref{eq:Kst-free}.
Thus, $G_q^{(k)}$ contains no one-sided copy of $K_{s,t}$.

Taking $T=\{x\}$ in Lemma~\ref{lem:multipartite-slicing} shows that $G_q^{(k)}$ is $(q^{t-1}-2)$-regular. Thus, $e(G_q^{(k)}) = \frac{k}{2}n_q(q^{t-1}-2)$.
Since $n_q=(1+o(1))q^t/(r(k-1))$, we have $q^{t-1}=(1+o(1))\bigl(r(k-1)n_q\bigr)^{1-1/t}$. Consequently,
\[
 e(G_q^{(k)}) = \frac{k}{2}n_q\left( r(k-1)n_q\right)^{1-1/t} (1+o(1)) =
 \left(
    \frac{k}{2}(k-1)^{1-1/t}r^{1-1/t}+o(1)
  \right)n_q^{2-1/t},
\]
which gives the desired bound when $n=n_q$.

For sufficiently large $n$, set $x=(r(k-1)n)^{1/t}$. By the prime number theorem in arithmetic progressions (see, e.g. \cite{Davenport}), there is a prime $q\equiv1\pmod{4r(k-1)}$ such that $q\le x$ and $q=(1-o(1))x$. Since $n_q<q^t/(r(k-1))\le n$, we may add isolated vertices to each part of $G_q^{(k)}$ to obtain a graph with $n$ vertices in each part. Moreover, $q=(1-o(1))x$ implies $n_q=(1-o(1))n$, so the resulting graph has the asserted number of edges.
\end{proof}

\section{A tripartite Ramsey application}\label{sec:applications}
We prove Theorem~\ref{thm:ramsey} by following the approach used in the proof of \cite[Theorem 3]{ARS}.

\begin{proof}[Proof of Theorem~\ref{thm:ramsey}]
For the upper bound, suppose that the edges of $K_{n,n,n}$ are colored with $m$ colors and there is no monochromatic $K_{3,3}$. Let $G_c$ be the spanning subgraph consisting of the edges of color $c$. Then \eqref{eq:k33} in Corollary~\ref{cor:t3-bounds} gives
\[
 3n^2=\sum_{c=1}^m e(G_c)
 \le m\left(\frac3{\sqrt[3]{2}}+o(1)\right)n^{5/3}.
\]
This implies that $n\le(1/2+o(1))m^3$, which gives the upper bound for $r_m^{(3)}(K_{3,3})$.

For the lower bound, we use a variant of the graph introduced at the beginning of Section~\ref{sec:construction}. Suppose that $s=t=3$, so $r=1$. Let $q\equiv1\pmod{8}$ be a prime power and $X=\F_{q^2}\times\F_q^\ast$.
For each $\lambda\in\F_q^\ast$, define a graph $H_\lambda$ on $X$ by the symmetric relation
\[
 (A,a)\sim_\lambda(B,b)
 \quad\Longleftrightarrow\quad
 \N(A+B)=\lambda ab.
\]
The graph $H_\lambda$ has a loop at $(A, a)$ exactly when $N(2A)= \lambda a^2$.
Alon, R\'onyai and Szab\'o~\cite[Section~3]{ARS} noted that 
\begin{align}
\label{eq:K33-free}
|\Gamma_{H_\lambda}(T)|\le 2 \quad \text{for all} \ T\subseteq X \text{ with } |T|=3.
\end{align}

Choose $\omega\in\F_q^\ast$ of order eight and set $\iota=\omega^2$. 
Let $X^\ast:=\F_{q^2}^\ast\times\F_q^\ast$ and define $h:X^\ast \to X^\ast$ by $h(A,a)=(\iota A,\omega^{1+q}a)$. Then $h$ is an automorphism of $H_\lambda[X^\ast]$. Apply the construction of Section~\ref{sec:construction} with three parts, that is, choose $U\subseteq X^\ast$ such that $X^\ast=h^{-1}U\sqcup hU$, and define the tripartite graph $G_q(\lambda)$ on $U_1\sqcup U_2\sqcup U_3$ by joining $x=(A,a)\in U_i$ and $y=(B,b)\in U_{i+1}$ if and only if
\[
x\sim_\lambda h(y) \quad\Longleftrightarrow\quad
 \N(A+\iota B)=\lambda \omega^{1+q} ab.
\]
Lemma~\ref{lem:multipartite-slicing} applies with respect to the relation $\sim_\lambda$. Using \eqref{eq:K33-free}, the proof of Theorem~\ref{thm:onesided-lower} shows that $G_q(\lambda)$ contains no one-sided copy of $K_{3,3}$. Hence, $G_q(\lambda)$ is $K_{3,3}$-free.

For each $i\in\{1,2,3\}$ and every $x=(A,a)\in U_i$ and $y=(B,b)\in U_{i+1}$ satisfying $A+\iota B\ne0$, assign to $xy$ the color
\[
 \chi(xy)=\frac{\N(A+\iota B)}{\omega^{1+q}ab}\in\F_q^\ast.
\]
In other words, an edge $xy$ has color $\lambda$ precisely when $x\sim_\lambda h(y)$. Thus none of the colors in $\F_q^\ast$ contains a copy of $K_{3,3}$.

For each $i\in\{1,2,3\}$ and $A\in\F_{q^2}^\ast$, let $Q_{i,A}$ be the complete bipartite graph whose two sides are
\[
 \{(A,a)_i:(A,a)\in U\}
 \quad\text{and}\quad
 \{(\iota A,b)_{i+1}:(\iota A,b)\in U\}.
\]
These are precisely the uncolored pairs between $U_i$ and $U_{i+1}$ because $A+\iota(\iota A)=A+\iota^2A=0$. For each fixed $i$, the graphs $Q_{i,A}$ are vertex-disjoint as $A$ varies, and each of their sides has size at most $q-1$.

Set $\rho=\lceil2q^{1/3}\rceil$. By \eqref{eq:rkK33}, we have $r_{\rho}(K_{3,3}) = (1+o(1))\rho^3 >2(q-1)$ for all sufficiently large $q$. Hence there is a $\rho$-coloring of the complete graph on $2(q-1)$ vertices with no monochromatic $K_{3,3}$. 
For each $i$, color all $Q_{i,A}$ by such a coloring. 
Since the graphs $Q_{i,A}$, $A\in\F_{q^2}^\ast$, are vertex-disjoint, 
no monochromatic $K_{3,3}$ is created. Use $3\rho$ new colors for the three pairs of host parts, disjoint from $\F_q^\ast$. Altogether, we use $(q-1)+3\rho=q+o(q)$ colors on parts of size
\[
 n_q=\frac{(q^2-1)(q-1)}2
 =\left(\frac12+o(1)\right)q^3.
\]
Suppose $m$ is sufficiently large.
The prime number theorem in arithmetic progressions (see~\cite{Davenport}) allows us to choose a
prime $q\equiv1\pmod8$ such that $q=(1-o(1))m$ and $q+3\rho\le m$.
The coloring above uses at most $m$ colors and gives $r_m^{(3)}(K_{3,3})  >n_q=\left(\frac12-o(1)\right)m^3$.
\end{proof}

\section*{Acknowledgment}
The authors thank Allan Lo, Aram Mathivanan, and Simona Boyadzhiyska for valuable discussions during the early stages of this research.

\subsection*{Declaration on the use of AI}
ChatGPT was used to assist with the preparation and presentation of this manuscript. The authors independently verified all arguments and references and take full responsibility for the manuscript.

\end{document}